\newtheorem{defin}[subsection]{Definition.}
\newtheorem{lemma}[subsection]{Lemma.}
\newtheorem{thm}[subsection]{Theorem.}
\newtheorem{cor}[subsection]{Corollary.}
\newcommand{\br}{\mbox{$\mathbb R$}}
\newcommand{\cc}{{\mathcal C}}
\newcommand{\cv}{{\mathcal V}}
\renewcommand{\phi}{\varphi}
\begin{document}
\title{The generic rank for  $A$--plannar structures.}
\author{Jaroslav Hrdina}

\address{Institute of Mathematics, Faculty of Mechanical Engineering,
Brno University of Technology, Czech Republic.}
\email{hrdina@fme.vutbr.cz}

\subjclass{53B10}
\keywords{Linear connection, Geodesics, F -planar, A-planar, Almost complex structure,
Projective structure, Distributions, Cliffordian structure
}

\begin{abstract}The paper mostly collects  material  
on generic rank of $A$--modules with respect to differential geometric applications. 
Our research was motivated by geometry of 
$A$--structures. In particular, we discuss the case where $A$ is an unitary associative 
algebra not necessary with inversion.  Some of the examples are studied in detail.

\end{abstract}

\maketitle
  \section{Motivation}
Let us say a few words about our geometric motivation. 
Various concepts generalizing geodetics have been studied for
almost quaternionic and similar geometries. Also various
structures on manifolds are defined as smooth distribution in the
vector bundle $T^{\star}M \otimes TM$ of all endomorphisms of the
tangent bundle. Very well known are  two examples: almost complex
and almost quaternionic structures. Let as extract some
formal properties from these examples. Unless otherwise stated,
all manifolds are smooth and they have the dimension $m$.  Let
$\nabla$ be a linear connection and let $c: \mathbb{R} \rightarrow
M$ be a curve on $M$. Then there is a vector field  $\dot{c} : =
\frac{\partial c(t) }{\partial t}: \mathbb{R} \rightarrow TM$ along 
the curve $c$.
Classically, a curve $c$ is a geodesic  if and only
if its tangent vectors $\dot{c}(t)$ are parallely transported
 along $c(t)$.
Let $M$ be a smooth manifold equipped with a linear connection
$\nabla$ and let $F$ be an affinor on $M$. A curve $c$ is called
{\em $F$--planar curve} \index{$F$--planar curve} if there is its
parametrization $c : \mathbb{R} \rightarrow M$ satisfying the
condition
$$ \nabla_{\dot{c}} \dot{c} \in \langle \dot{c} , F(\dot{c}) \rangle.$$
 It is easy to see that geodesics are $F$--planar curves for all
affinors $F$, because of $\nabla_{\dot{c}} {\dot{c}} \in \langle
\dot{c} \rangle \subset \langle \dot{c},F(\dot{c}) \rangle$.
 The best known example is  an almost complex structure. 
We have to be careful about
the dimension of $M$. Let $M$ be a manifold of dimension two and
let $I$ be a complex structure. A curve $c$ is $F$--planar for
$F=I$ if and only if  $c$ is satisfying the
identity $\nabla_{\dot{c}} \dot{c} \in \langle \dot{c},I \dot{c}
\rangle \cong \mathbb{R}^2$, and any curve $c$ satisfy the
identity $ \nabla_{\dot{c}} \dot{c} \in \mathbb{R}^2$. In other
words any curve $c$ is $F$--planar on the manifold of dimension
two. The concept of $F$--planar curves makes sense for dimension
at least four.
Consider  almost hypercomplex structure $(I,J,K)$. 
The curve  $c: \mathbb{R}
\rightarrow M$  such that $\nabla_{\dot{c}} {\dot{c}} \in \langle
\dot{c}, I(\dot{c}), J(\dot{c}), K(\dot{c}) \rangle$ is called
{\em $4$--planar}. It is easy to see
that all geodesics are $4$--planar curves, because of
$\nabla_{\dot{c}} \dot{c} \in \langle \dot{c} \rangle \subset
\langle \dot{c} , I(\dot{c}), J(\dot{c}),K(\dot{c}) \rangle$ and
also all $F$--planar curves are 4--planar, if  $F\in \langle
I,J,K,E \rangle$. 
This simple consequence of standard behavior of the generators of a vector
subspace suggests the generalization of the planarity concept below.

\begin{defin} \label{12} {\em
Let $M$ be a smooth manifold of dimension $m$. Let $A$ be a smooth
$\ell$--rank $(\ell<m)$ vector subbundle in $ T^* M \otimes TM $,
such that the identity affinor $E=id_{TM}$ restricted to $T_x M$
belongs to $A_x \subset T^*_xM \otimes T_xM $ at each point $x \in
M$. We say that $M$ is equipped by $\ell$--rank {\em
$A$--structure.} \index{$A$--structure}}
\end{defin}

In Definition \ref{12}, the dimension of $M$ is higher than the
rank of $A$. This is not a restriction, because there are no
$A$--structures of rank $\ell$ higher than $m$. The possibility
$\ell=m$ is not interesting, because in this event every curve is
$A$--planar.

\begin{defin} {\em
For any tangent vector $X \in T_x M$ we shall write $A(X)$ for the
vector subspace
$$ A(X) = \lbrace F(X) | F \in A_x M \rbrace \subset T_x M$$
and we call $A(X)$ the {\em $A$--hull of the vector
$X$}\index{$A$--hull}. Similarly, the {\em $A$--hull of vector
field} will be subbundle in $TM$ obtained pointwise.}
\end{defin}

\section{The generic rank}

For every smooth parameterized curve $c: \br \rightarrow M$  we
write $\dot{c}$ and $A(\dot{c})$ for the tangent vector field and
its $A$--hull along the curve $c$.

\begin{defin} {\em
Let $M$ be a smooth manifold equipped with an $A$--structure and a
linear connection $\nabla$.  A smooth curve  $c: \mathbb{R} \rightarrow M $ 
 is told to be {\em $A$--planar} if 
$$ \nabla_{\dot{c}} \dot{c} \in A(\dot{c}).$$
}\end{defin}

Clearly, $A$--planarity means that the parallel transport of any
tangent vector to $c$ has to stay within the $A$--hull
$A (\dot{c})$ of the tangent vector field $\dot{c}$ along the
curve. 
\begin{defin}{\em
Let $(M,A)$ be a smooth manifold $M$ equipped with an $\ell$--rank
$A$--structure. We say that the $A$--structure has
\begin{enumerate}
\item {\em generic rank $\ell$}  \index{generic rank $\ell$} if
for each $x \in M$ the subset of vectors $(X,Y) \in T_xM \oplus
T_x M$, such that the $A$--hulls $A(X)$ and $A(Y)$ generate a
vector subspace $A(X) \oplus A(Y)$ of dimension $2 \ell$ is open
and dense in $T_xM \oplus
T_x M$. 
\item {\em weak generic rank $\ell$} \index{weak
generic rank $\ell$} if for each $x \in M$ the subset of vectors
$$ \mathcal{V} := \lbrace X \in T_x M | \operatorname{dim} A(X) =
\ell \rbrace
$$ is open and dense in $T_x M$.
\end{enumerate} }
\end{defin}

One immediately checks that any $A$--structure which has generic
rank $\ell$ has week generic rank $\ell$. Indeed, if $U\subset T_xM$ is an
open subset of vectors $X$ with $A(X)$ of dimension lower than $\ell$, then
$U \times U$ is an open subset with to low dimension, too.

\begin{lemma} \label{1aff}
Let $M$ be a smooth manifold of dimension at least two
and $F$ be an affinor such that $F
\neq q \cdot E$, $q \in \mathbb R$. Then the $A$--structure, where 
$A=\langle E,F\rangle$ has weak generic rank $2$.
\end{lemma}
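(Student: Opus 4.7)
The statement is pointwise, so I would fix $x \in M$ and work inside the vector space $V := T_xM$ (with $\dim V \geq 2$). Since $A_x = \langle E, F_x\rangle$, the $A$--hull of a tangent vector is $A(X) = \langle X, F_x(X)\rangle$, which has dimension two exactly when $X$ and $F_x(X)$ are linearly independent. So I must show that the ``bad'' set
\[
\mathcal B := \{\,X \in V : F_x(X) \in \langle X\rangle\,\}
\]
is closed with empty interior in $V$.

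The key observation is that $\mathcal B$ is a \emph{finite} union of linear subspaces, namely
\[
\mathcal B \;=\; \{0\} \cup \bigcup_{\la \,\in\, \operatorname{Spec}_{\br}(F_x)} \ker(F_x - \la E),
\]
because an $X\neq 0$ lies in $\mathcal B$ iff $F_x(X) = \la X$ for some real $\la$, and $F_x$ has only finitely many real eigenvalues. Closedness of $\mathcal B$ is thus immediate. For emptiness of the interior I would invoke the elementary fact that a real vector space of positive dimension is never a finite union of proper subspaces; hence it suffices to check that each eigenspace $\ker(F_x - \la E)$ is a \emph{proper} subspace of $V$, i.e.\ that $F_x$ is not a scalar multiple of $E$ at the point $x$.

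This is exactly where the hypothesis ``$F \neq q\cdot E$'' enters, and I expect the only real subtlety is its interpretation: it must be read pointwise, since otherwise $A = \langle E,F\rangle$ would fail to be a rank--two subbundle in the sense of Definition \ref{12}. With that reading in force, $\mathcal B$ has empty interior, $\cv = V \setminus \mathcal B$ is open and dense, and the lemma follows. Alternatively, closedness of $\mathcal B$ can be seen directly from the continuity of the map $X \mapsto X \wedge F_x(X) \in \Lambda^2 V$, whose zero locus is precisely $\mathcal B$; this variant avoids any discussion of the real spectrum.
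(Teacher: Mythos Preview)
Your proof is correct and follows essentially the same line as the paper's: identify the complement of $\cv$ as the union of the real eigenspaces of $F_x$, then argue this set is closed with empty interior. Your version is more carefully written --- you make explicit that the union is finite and invoke the fact that a finite union of proper subspaces cannot cover a real vector space, whereas the paper's proof is terser (and slightly imprecise in writing ``$X + aF(X)=0$'', which misses $\ker F_x$, and in jumping from ``open and nontrivial'' to ``dense''); your wedge-product alternative for closedness and your remark on the pointwise reading of $F\neq qE$ are useful additions not present in the original.
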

\begin{proof}
 Consider
$A$--structure $A = \langle E,F \rangle $. The complement of
$\mathcal{V}$ consists of vectors  $X \in T_x M$ such that:
$$ X + a F(X) = 0, \: a \in \br,$$
i.e.  eigenspace of $F$. Dimension of $A$ is two and $F$ is not
multiple of the identity. Thus,
the union of eigenspaces of $F$ is closed or
trivial vector subspace of $T_x M$. Thus, the complement $\mathcal{V}$
is open and nontrivial, i.e. open and dense.
\end{proof}

There is only one possibility for the $A$--structures in the
lowest dimension one $A=\langle E \rangle$. The algebra $\langle E
\rangle$ is an algebra with inversion, such that $E \cdot E = E$.
For every $X \in T_x M$, $A(X)$ is the straight line containing $A$.

Finally, let us recall two important examples. 
The pair $(M,F)$, where $M$ is a smooth manifold and $F$ is an affinor on $M$
is called a complex structure if and only if 
$ F^2 = - E = -id_M $. An almost complex structure has generic rank two 
on all manifolds of dimension at least four, because of Lemma \ref{1aff}. 
The pair $(M, F)$ is  called a product structure on $M$ if and only
if $ F^2 = E $ and $F\ne E$. An almost product structure has 
generic rank two on all manifolds of dimension at least four, because of 
Lemma \ref{1aff}.

\section{The case where $A$ is an algebra}

\begin{lemma}
Every $A$--structure $(M,A)$ on a manifold $M$, $\dim M \geq \dim A$, where $A$ is an algebra with inversion,
 has weak generic rank $\dim A$.
\end{lemma}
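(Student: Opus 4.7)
The plan is to prove the stronger statement that for every $x \in M$ and every nonzero $X \in T_xM$ one already has $\dim A(X) = \ell$. This immediately yields $\mathcal{V} \supseteq T_xM \setminus \{0\}$, and since $\{0\}$ is closed with empty interior, $\mathcal{V}$ is open and dense. So the whole task reduces to a pointwise injectivity calculation.

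To execute this, I would fix $x \in M$ and consider the evaluation map $\mathrm{ev}_X \colon A_x \to T_xM$, $F \mapsto F(X)$, whose image is by definition $A(X)$. The rank--nullity identity gives
$$\dim A(X) = \dim A_x - \dim \ker \mathrm{ev}_X = \ell - \dim\{F \in A_x : F(X) = 0\},$$
so it suffices to show $\ker \mathrm{ev}_X = 0$ whenever $X \neq 0$.

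The injectivity step is precisely where the hypothesis that $A_x$ is an algebra with inversion enters. Suppose $F \in A_x$ satisfies $F(X) = 0$. If $F \neq 0$ then, by the hypothesis, $F$ admits an inverse $F^{-1} \in A_x$ with $F^{-1} \circ F = E$; applying $F^{-1}$ to both sides of $F(X) = 0$ yields $X = E(X) = F^{-1}(F(X)) = 0$, contradicting $X \neq 0$. Hence $F = 0$ and $\mathrm{ev}_X$ is injective, giving $\dim A(X) = \dim A_x = \ell$. Note that the standing hypothesis $\dim M \geq \dim A$ is exactly what makes this conclusion compatible with $A(X) \subseteq T_xM$; without it, injectivity of $\mathrm{ev}_X$ would be impossible.

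I do not expect any real obstacle in carrying this out; the proof is essentially a one-line consequence of the division property of $A_x$. The only point that requires care is unpacking the phrase \emph{algebra with inversion} correctly, namely that every nonzero $F \in A_x$ has a two--sided inverse inside $A_x$ so that the identity affinor $E \in A_x$ acts as the genuine identity on $T_xM$ via composition.
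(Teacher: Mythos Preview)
Your proof is correct and follows essentially the same approach as the paper: both argue that if $\dim A(X) < \ell$ then some nonzero $F \in A_x$ kills $X$, and applying $F^{-1}$ forces $X = 0$, so that $\mathcal{V} = T_xM \setminus \{0\}$. Your version makes the rank--nullity step and the evaluation-map framing explicit, whereas the paper compresses the argument to a single line, but the underlying idea is identical.
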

 
\begin{proof}
Consider $X$ such that $X \notin \mathcal{V}$, therefore $\exists F \in A =\langle E,G\rangle, FX =0$, 
and $F^{-1} F X = 0$ implies $X=0$.
\end{proof}
  
\begin{thm} \label{lemma_det}
Let $M$ be an $A$--structure and 
let $X_1, \dots, X_m$ be a basis of $V:=T_x M$, i.e. $V$ is an $A$--module. 
Let $A$ be an $n$--dimensional $k$--algebra, where $n < m $.  
If there exists $X\in V$ such that $\dim(A(X))=n$ then 
the $A$--structure has weak generic rank $n$.
\end{thm}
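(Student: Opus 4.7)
The strategy is to express the condition $\dim A(X)=n$ as the non-vanishing of a family of polynomial functions on $V=T_xM$, and then apply the standard fact that the zero locus of a nonzero polynomial on $\mathbb R^m$ (or $\mathbb C^m$) has empty interior. Concretely, I would fix a $k$-basis $F_1,\dots,F_n$ of the algebra $A$; then $A(X)$ is the span of $F_1(X),\dots,F_n(X)$, so $\dim A(X)=n$ is equivalent to these $n$ vectors being linearly independent in $V$. Writing them in the basis $X_1,\dots,X_m$ assembles an $m\times n$ matrix $M(X)$ whose entries are linear functions in the coordinates of $X$, and independence amounts to at least one maximal $n\times n$ minor $p_i(X)$ of $M(X)$ being nonzero. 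Each $p_i$ is a homogeneous polynomial of degree $n$ in the coordinates of $X$.

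Openness of $\mathcal V$ is then immediate: for any $X\in\mathcal V$ some $p_i(X)\neq 0$, and by continuity $p_i$ remains nonzero on an open neighborhood of $X$, which therefore sits inside $\mathcal V$.

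The density step is where the hypothesis enters. The assumption that there exists $X_0\in V$ with $\dim A(X_0)=n$ says precisely that at least one minor $p_{i_0}$ is not identically zero on $V$. The complement $V\setminus\mathcal V$ is the common zero locus of all the $p_i$, so in particular it is contained in the proper algebraic hypersurface $\{p_{i_0}=0\}\subset V\cong k^m$. Over $k=\mathbb R$ or $\mathbb C$, the zero set of a nonzero polynomial has empty interior, whence $\mathcal V$ is dense.

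The proof is essentially bookkeeping; the only substantive point is recognizing that the hypothesis $\dim A(X_0)=n$ converts directly into the non-triviality of some minor $p_{i_0}$ as a polynomial, after which openness and density are standard. Note that the associative algebra structure on $A$ is not actually used in the argument — only the linear structure of $A\subset T_x^*M\otimes T_xM$ enters — so the lemma really concerns arbitrary vector subspaces admitting a vector of maximal hull dimension.
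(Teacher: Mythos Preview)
Your proof is correct and is in fact cleaner than the argument in the paper. The two take genuinely different routes. You recognize the set $\mathcal V$ as the complement of a determinantal variety: writing the $F_i(X)$ as columns of an $m\times n$ matrix with linear entries, the condition $\dim A(X)=n$ becomes the non-vanishing of at least one $n\times n$ minor, and the hypothesis supplies a witness $X_0$ showing some minor is not identically zero; openness and density then follow from continuity and the standard fact that a nonzero real polynomial has nowhere-dense zero set. The paper instead argues the contrapositive directly: if $\mathcal V$ fails to be dense, its complement contains an open set $U$, and for a fixed $X\in U$ every small perturbation $X-\epsilon Y$ is annihilated by some nonzero $G_Y\in A$; after a rescaling trick the paper concludes that \emph{every} $Y$ is annihilated by some nonzero affinor, so no $X$ with $\dim A(X)=n$ can exist. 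Your polynomial approach is shorter, avoids the somewhat delicate rescaling step (the paper's $1/\sqrt{\epsilon}$ normalization is not quite right as written, though the intended conclusion $G_Y(\tfrac{1}{\epsilon}X)=G_Y(Y)$ is recoverable), and, as you note, makes transparent that only the linear structure of $A$ is used. The paper's route has the minor virtue of staying entirely within elementary linear-algebraic manipulations with no appeal to properties of real algebraic sets.
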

\begin{proof}
We prove equivalent statement, $A$--module $V$ does not have a generic rank 
$\ell$ if and only if there is a vector $X\in V$,  such that 
for any vector $Y \in V$ there is 
an affinor $G_{Y}$, such that 
$G_Y (X - \epsilon Y) =0$, 
for small $\epsilon$. Therefore, for any vector $Y \in V$ 
there is an affinor $G_Y$ such that 
$$G_Y (X)= \epsilon G_Y ( Y) $$
for small $\epsilon$.
Hence, the affinor $\frac{1}{\sqrt{\epsilon}} G_Y$ maps 
$\frac{1}{\sqrt{\epsilon}}X$ to a vector
$G_Y (Y)$
and therefore, 
for any vector $Y \in V$ 
there is an affinor $H_Y$
such that $H_Y (\frac{1}{\sqrt{\epsilon}} X)= H_Y ( Y)$. In particular, 
there is an affinor $S$ such that  $S(\frac{1}{\sqrt{\epsilon}}X) = S(Y+\frac{1}{\sqrt{\epsilon}}X)$ 
 and therefore,
for any $Y\in V$ there is an affinor $S_Y$ such that $S_Y(Y)=0$.
\end{proof}

\begin{thm} \label{a1} \label{weak-lemma} \label{l416}
Let $(M,A)$ be a smooth manifold of dimension $m$ equipped with
$A$--structure of rank $\ell$, such that $2\ell \leq m$. If
$A_x$ is an algebra (i.e. for all $f,g \in A_x , \: fg:=
f\circ g \in A_x$) for all $x\in M$,
and $A$ has weak generic rank $\ell$ then
the structure has generic rank $\ell$.
\end{thm}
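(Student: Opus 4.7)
I will recast ``generic rank $\ell$'' as openness of a polynomial condition on pairs. Fix $x$, set $V=T_xM$, and pick a basis $E=F_1,\dots,F_\ell$ of $A_x$. For $(X,Y)\in V\oplus V$ form the $m\times 2\ell$ matrix $M(X,Y)=[F_1X,\dots,F_\ell X\mid F_1Y,\dots,F_\ell Y]$, whose rank equals $\dim(A(X)+A(Y))$ and is at most $2\ell$. The condition $\operatorname{rank}M(X,Y)=2\ell$ is open (some $2\ell\times 2\ell$ minor is nonzero); its complement is the common zero locus of these polynomial minors, hence either all of $V\oplus V$ or a proper real algebraic subset, and in the latter case it is automatically nowhere dense. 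So the theorem reduces to exhibiting a single pair $(X^\ast,Y^\ast)$ with $\dim(A(X^\ast)+A(Y^\ast))=2\ell$.

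To build such a pair I will use the algebra hypothesis to pass to a quotient module. By weak generic rank choose $X^\ast$ with $\dim A(X^\ast)=\ell$, so that $F\mapsto FX^\ast$ is an injective linear map $A\to V$. Since $A$ is closed under composition, $G(FX^\ast)=(GF)X^\ast\in A(X^\ast)$ for all $F,G\in A$, so $A(X^\ast)$ is an $A$-submodule of $V$ and $A$ acts on the quotient $V/A(X^\ast)$, whose dimension is $m-\ell\geq\ell$. A vector $Y^\ast$ will then satisfy $\dim(A(X^\ast)+A(Y^\ast))=2\ell$ precisely when the induced map $A\to V/A(X^\ast)$, $F\mapsto\overline{FY^\ast}$, is injective, i.e.\ when the class $\bar Y^\ast$ has an $A$-orbit of full dimension $\ell$ in the quotient.

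Producing such a $\bar Y^\ast$ is where I expect the argument to be delicate. The strategy is to invoke Theorem~\ref{lemma_det} on the $A$-module $V/A(X^\ast)$: finding just one class of rank $\ell$ in the quotient promotes to an open dense set of such classes, which is more than enough. The obstacle is that $A$ need not act faithfully on the quotient: the two-sided ideal $I_{X^\ast}=\{F\in A:F(V)\subseteq A(X^\ast)\}$ must be shown to be zero, for otherwise no class in $V/A(X^\ast)$ can have rank $\ell$. Ruling out $I_{X^\ast}\neq 0$ is where the algebra hypothesis, the weak generic rank of $A$ on $V$, and the inequality $2\ell\leq m$ should combine---plausibly by readjusting $X^\ast$ within the open dense set $\mathcal V$ of weak-generic-rank vectors so that $A(X^\ast)$ does not contain the image of any nonzero affinor---after which the existence of $\bar Y^\ast$, and hence of $Y^\ast$, follows by Theorem~\ref{lemma_det} applied to the $A$-action on $V/A(X^\ast)$.
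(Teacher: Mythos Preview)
Your reduction to exhibiting a single pair $(X^\ast,Y^\ast)$ of full rank, via the Zariski-open nature of the rank-$2\ell$ condition, is sound, and you correctly isolate the crux as the faithfulness of the $A$-action on $V/A(X^\ast)$. But the fix you sketch---readjusting $X^\ast$ inside $\mathcal V$ so that $A(X^\ast)$ contains the image of no nonzero affinor---does not work. Take $V=\mathbb R^4$ and $A=\langle E,N\rangle$ with $N^2=0$ and $\operatorname{rank}N=1$: this is a $2$-dimensional algebra, weak generic rank $2$ holds (the bad set is the hyperplane $\ker N$), yet for \emph{every} $X^\ast$ with $NX^\ast\neq 0$ one has $N(V)=\operatorname{im}N=\langle NX^\ast\rangle\subset A(X^\ast)$, so $N\in I_{X^\ast}$ regardless of the choice of $X^\ast$. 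Worse, $\dim(A(X)+A(Y))\le 3$ for all pairs, since $NX$ and $NY$ always lie in the one-dimensional $\operatorname{im}N$; no pair attains $2\ell=4$, so the single good pair you need simply does not exist here.

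The paper's route is different and bypasses your quotient construction: it argues directly that for $X,Y\in\mathcal V$ either $A(X)\cap A(Y)=0$ or $A(X)=A(Y)$, using that $Z\in A(X)$ forces $A(Z)\subseteq A(X)$ (with equality when $Z\in\mathcal V$), and then perturbs one member of a bad pair out of $A(X)$ to obtain density. This dichotomy is the idea your plan is missing. That said, the example above also probes the paper's argument: the nonzero intersection vector $Z\in A(X)\cap A(Y)$ need not lie in $\mathcal V$ (here $A(X)\cap A(Y)\subset\operatorname{im}N\subset\ker N=V\setminus\mathcal V$), so the chain $A(X)=A(Z)=A(Y)$ is not justified without further hypotheses, and indeed $A(X)\neq A(Y)$ for generic $X,Y$ in this example. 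Your instinct that this step is delicate was therefore well-founded.
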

\begin{proof}
Since the $A$--structure has a weak generic rank $\ell$, there is
an open and dense subset $\cv\subset TM$ such that
$\operatorname{dim}A(X) = \ell$ for all $X\in \cv$.

Because $A$ is an algebra, for any $X,Z \in TM$,  $Z \in A (X)$ implies also
$A(Z)
\subset A(X)$, and moreover $ A(Z) = A(X) $ for all
$X, Z\in \mathcal{V}$ because of the dimension.
Thus, whenever there is a non--trivial vector $0\ne Z\in A(X)\cap A(Y)$, the
entire subspaces coincide, i.e. $A(X) = A(Y)$.

In particular, whenever $X,Y\in \cv$ and
the dimension of $A(X)+A(Y)$ is less then $2\ell$,
we know $A(X)= A(Y)$.

Let us consider a couple of vectors $(Y,Z)\in A(X)\oplus A(X)$ for some
$X\in \cv$.
Consider a vector $W \notin A(X)$. An open
neighbourhood $\mathcal U$ of $Y$ has to include $(Y + a W, Y)$ for
all sufficiently small
$a \in \mathbb{R}$. But if $Y + aW \in A(X)$ for some $a\ne 0$
then $W \in A(X)$ and
this is not true. Thus, for every couple of vectors in $A(X) \oplus A(X)$
and
 for its every open neighbourhood, we have found another couple
$(Y'=Y+aW,Z)$ for which the dimension of $A(Y')+A(Z)$ is $2\ell$. This
proves the density of the set of couples of vectors generating the maximal
dimension $2\ell$.

Of course, the requirement on the maximal dimension is an open
condition and the theorem is proved.
\end{proof}

\begin{cor} \label{l415}
Let $(M,A)$ be a smooth manifold with $A$--structure of rank $\ell$,
such that 
$2\ell \leq \operatorname{dim} M$. If  $A_x \subset
T_x^{\star}M \otimes T_x M $ is an algebra with inversion then $A$ has weak
generic rank. Moreover, if $\operatorname{dim}M\ge 2\ell$ than $A$ has
generic rank $\ell$.
\end{cor}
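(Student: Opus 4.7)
The plan is to observe that this corollary is essentially a packaging of two previously established results: the lemma at the start of Section 3 (algebra with inversion implies weak generic rank $\dim A$) and Theorem \ref{a1} (algebra plus weak generic rank $\ell$ plus $2\ell\leq \dim M$ implies generic rank $\ell$). So the work is just to verify that the hypotheses line up and chain the two implications together.

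First I would handle the weak generic rank claim. Since $A_x$ is assumed to be an algebra with inversion of rank $\ell = \dim A$, and since the dimension hypothesis $2\ell \leq \dim M$ certainly gives $\dim M \geq \dim A$, the earlier lemma applies directly and yields that $(M,A)$ has weak generic rank $\ell$. In particular the set $\cv = \{X \in T_x M : \dim A(X) = \ell\}$ is open and dense in each tangent space; the argument there is simply that if $FX=0$ for some $F\in A_x$, then multiplying by $F^{-1}$ forces $X=0$, so only the zero vector fails to attain the maximal dimension.

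For the second assertion I would now invoke Theorem \ref{a1}. An algebra with inversion is in particular an algebra (it is closed under composition), so the hypothesis of Theorem \ref{a1} on $A_x$ is met. The rank hypothesis $2\ell \leq \dim M$ is exactly what is required there, and the weak generic rank $\ell$ was just established in the previous step. Hence Theorem \ref{a1} upgrades weak generic rank to generic rank $\ell$, which is the remaining claim.

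There is really no obstacle here beyond bookkeeping; the substantive work sits in the two results being combined. The only small care needed is to note explicitly that ``algebra with inversion'' entails ``algebra,'' so that Theorem \ref{a1} does apply, and that the dimension hypothesis $2\ell \leq \dim M$ simultaneously ensures $\dim M \geq \dim A$ required for the weak generic rank lemma and the strict rank bound required for Theorem \ref{a1}.
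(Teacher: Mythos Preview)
Your proposal is correct and matches the paper's intended argument: the corollary is stated without proof immediately after Theorem~\ref{a1}, precisely because it is meant to follow by chaining the earlier lemma on algebras with inversion (giving weak generic rank $\ell$) with Theorem~\ref{a1} (upgrading to generic rank $\ell$ under $2\ell\le\dim M$). Your bookkeeping---noting that ``algebra with inversion'' implies ``algebra'' and that $2\ell\le\dim M$ subsumes $\dim M\ge\dim A$---is exactly the content required.
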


\begin{cor} \label{l415}
Let $(M,A)$ be a smooth manifold with $A$--structure of rank $\ell$,
such that  $2\ell \leq \operatorname{dim} M$ and $A$ is an algebra. 
If there exists $X\in T_xM$ such that $\dim(A(X))=n$ then 
the $A$--structure has generic rank $n$.
\end{cor}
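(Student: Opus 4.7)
The plan is to chain Theorem~\ref{lemma_det} and Theorem~\ref{weak-lemma} in that order. First, observe that since the $A$--structure has rank $\ell$, each fibre $A_x$ is an $\ell$--dimensional algebra, so in the notation of Theorem~\ref{lemma_det} one has $n = \ell$; the hypothesis $\dim A(X) = n$ then asserts exactly that $A(X)$ attains the maximal possible dimension for some $X \in T_xM$. The statement of the corollary is thus that, under the algebra assumption on $A$ plus the dimensional bound $2\ell \le \dim M$, a single witness of maximal $A$--hull is enough to force generic rank.

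I would then apply Theorem~\ref{lemma_det}. Its premise that $A$ be an $n$--dimensional algebra with $n < \dim M$ is satisfied: since $E \in A$ we have $\ell \ge 1$, and combined with $2\ell \le \dim M$ this yields $n = \ell \le \dim M/2 < \dim M$. The existence of the vector $X$ with $\dim A(X) = n$ is the hypothesis of the corollary. The theorem therefore outputs that the $A$--structure has weak generic rank $n = \ell$. I would then feed this into Theorem~\ref{weak-lemma}, whose three hypotheses, namely $2\ell \le \dim M$, the fibrewise algebra property on $A$, and weak generic rank $\ell$, are all in hand (the last by the previous step). Its conclusion is precisely that the $A$--structure has generic rank $\ell = n$, as required.

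No substantive obstacle arises; the corollary is a pure bookkeeping composition of the two results above. The only point to watch is the identification $n = \ell$ between the two notational conventions, which is immediate from the definitions of the rank of an $A$--structure and of the dimension of the fibre algebra $A_x$, together with the mild verification that $2\ell \le \dim M$ is strong enough to imply the strict inequality $n < \dim M$ demanded by Theorem~\ref{lemma_det}.
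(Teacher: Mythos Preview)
Your proposal is correct and matches the paper's intended approach: the corollary is stated immediately after Theorems~\ref{lemma_det} and~\ref{weak-lemma} without proof precisely because it is the straightforward composition of those two results, and your write-up supplies exactly that chaining together with the minor check that $2\ell \le \dim M$ forces the strict inequality $n<m$ needed for Theorem~\ref{lemma_det}.
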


\section{Remark on Frobenious algebras}
Let  $A$ be an algebra over $\br$ with basis $\{F_i \}$, where $i = 1, \dots, n$, 
$F_1:= E$,  with structure constants 
$$C_{ij}^k \in \br  \text{ (i.e. } F_i F_j = C^s_{ij} F_s \text{)}.$$ 
In particular,  one can easily see that 
$$F_i=F_1F_i=C_{1i}^s F_s = \delta_i^s F_s, \text{ i.e. } C_{1i}^s=\delta_i^s.$$
 We introduce  matrices 
 $$ \hat{C}_i = (C^k_{ji}), \: \: \: \hat{C}_i^* = (C^j_{ik}), $$
 where $j$ is a number of rows.
 Then the associativity condition can be written as
 $$\hat{C}_j \hat{C}_k = C_{jk}^s \hat{C}_s \text{ or } 
 \hat{C}_j^* \hat{C}_k^* = C_{jk}^s \hat{C}_s^* $$
 and unity can be written as $\hat{C}_1=E$.
A linear functional $\epsilon: A \to \br$ is determined 
by the choice of a $n$--dimensional vector 
$\lambda = (\lambda_1, \dots,\lambda_n)$.

Now, for $\epsilon (F_i) = \lambda_i $
and  for $F = \sum_{i=1}^n a_i F_i  \in A$ we can  see immediately 
that $\epsilon (F) = \sum_{i=1}^n a_i \lambda_i $ and 
finally $F_i F_j = C_{ij}^s F_s$. We prove that, 
if  $\lambda \in \br^n$
be a vector such that the matrix $G = (g_{ij})$ is regular, 
where $g_{ij} :=  C_{ij}^s \lambda_s$. The functional $\epsilon : A \to \br$,
such that 
$$\epsilon :\sum_{i=1}^n a_i F_i \mapsto \sum_{i=1}^n a_i \lambda_i $$
is a Frobenius form.

The formula for generic rank $n$ from the Theorem \ref{lemma_det}.
reads that if there exists 
$X \in V$ such that $\{F_i X \}$  are linearly independent  
then $V$   has a generic rank.  
On the other hand, if there 
exists $\lambda \in \br^n$ such that $\{\hat{C}_i \lambda \}$   
are linearly independent then  $A$ is a Frobenius algebra.
This indicates that these  properties lead to similar conditions. 

In other words,
if $A$ is an algebra over $\br$ and the matrices $\hat{C}_i$ are structural matrices of $A$, 
then there is $B$--module $\br^n$, where $B = \langle \hat{C}_1, \dots, \hat{C}_n  
\rangle$.
Therefore, the algebra 
$A$ is a Frobenius algebra if and only if the $B$--module $\br^n$ has  generic rank $n$.

\section{Examples}
One can apply these results to two big groups of geometric structures, Clifford algebras and 
distributions.
\subsection{ An almost Cliffordian manifolds} 
An almost Clifford and almost Cliffordian manifolds are $G$-structures based on the definition of
Clifford numbers.
An almost Clifford manifold based on $ \cc l (s,t)$ is given by a reduction of the structure group 
$GL(km, \mathbb R)$ to  $GL(m, {\mathcal O})$, where $k=2^{s+t}$,
 $m \in \mathbb N$ and $\mathcal O$ is an arbitrary Clifford algebra. An almost Cliffordian manifold   is given by a reduction of the structure group
to  $GL(m, \mathcal O) GL(1, \mathcal O)$. 
It is easy to see that an almost Cliffordian structure is an $A$--structure, where
$A$ is a Clifford algebra $\mathcal O$ because
the affinors in the form of $F_0, \dots, F_{\ell} \in A $
can be defined only locally.
In \cite{hv12} authors prove the following theorem.
\begin{thm}
Let $F_0,\dots F_k$ denote the $k+1$ elements of the matrix representation of Clifford algebra 
$\cc l(s,t)$. Then there exists a real vector $X$ such that the dimension of a linear span
$\langle F_i X | i=1,\dots,k\rangle$ equals to $k+1$.
\end{thm}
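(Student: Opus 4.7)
The conclusion is that the evaluation map $\phi_X:A\to V$, $F\mapsto FX$, is injective for some $X$ in the representation space $V$; equivalently, the vectors $F_0X,\dots,F_kX$ are linearly independent. The locus where $\phi_X$ fails to be injective is cut out by the simultaneous vanishing of all $(k+1)\times(k+1)$ minors of the matrix $[F_0X|\cdots|F_kX]$, whose entries are linear in the coordinates of $X$. This locus is therefore Zariski closed, so openness and density of its complement will follow automatically from exhibiting a single good $X$. Fed back into Theorem~\ref{lemma_det}, this would yield the weak generic rank $k+1$ promised by the present paper.

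My plan is to reduce the problem to the classical structure theory of real Clifford algebras. By the Wedderburn--type classification, $A=\cc l(s,t)$ is isomorphic either to $\mathrm{Mat}_n(\bk)$ (simple case) or to $\mathrm{Mat}_n(\bk)\oplus \mathrm{Mat}_n(\bk)$ (non--simple case), with $\bk\in\{\br,\bc,\bh\}$; in particular $A$ is semisimple. The matrix representation $V$ being faithful, semisimplicity decomposes $V$ into copies of the minimal irreducible $V_0=\bk^n$ (together with a second type $V_0'$ in the split case). Writing $d=\dim_\br\bk$, one has $\dim_\br V_0=nd$ and $\dim_\br A=n^2 d=k+1$, so the hypothesis $\dim_\br V\geq k+1$ available in the almost Cliffordian setup forces the multiplicity of each simple summand to be at least $n$. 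A separating $X$ is then built explicitly: place a $\bk$--basis $e_1,\dots,e_n$ of $V_0$ into the first $n$ copies of $V_0$ sitting in $V$ (and do the analogous thing in the second isotypic block in the non--simple case), with zeros elsewhere. Any $F\in A$ with $\phi_X(F)=0$ then satisfies $Fe_i=0$ for every $i$, forcing $F$ to annihilate a spanning set of each irreducible summand, and hence $F=0$.

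The step I expect to require the most care is the dimension bookkeeping --- verifying that the available dimension of $V$ really does force the multiplicity of each simple summand to be at least $n$. In the simple case this is immediate, but in the non--simple case $\mathrm{Mat}_n(\bk)\oplus \mathrm{Mat}_n(\bk)$ one must track dimensions separately for each isotypic component, and the bare inequality $\dim V\geq \dim A$ does not by itself suffice: one needs the specific form of the matrix representation used to define the almost Cliffordian manifold in order to guarantee that each factor is seen with multiplicity at least $n$. Once that bookkeeping is in place, the rest of the argument is routine linear algebra.
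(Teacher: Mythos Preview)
The paper does not supply its own proof of this theorem: the result is quoted from \cite{hv12} and only the statement appears here. There is therefore nothing in the present text to compare your argument against.

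On the substance of your sketch: the reduction to Wedderburn structure theory and the construction of a separating vector by stacking a $\bk$--basis of $V_0$ across the first $n$ isotypic copies is sound and standard. You have yourself located the one genuine obstacle: in the split case $A\cong\mathrm{Mat}_n(\bk)\oplus\mathrm{Mat}_n(\bk)$ the bare inequality $\dim_{\br}V\ge\dim_{\br}A$ does not force each isotypic multiplicity up to $n$; for instance $V\cong V_0^{\,2n-1}\oplus V_0'$ is faithful of the right dimension but admits no separating vector when $n>1$. Closing this gap requires knowing that the ``matrix representation'' intended in \cite{hv12} is specifically $m$ copies of the left regular module of $\cc l(s,t)$ (or some equally balanced module). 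If that is granted, the argument collapses to a triviality --- the unit $1$ in the first regular copy is already separating, since $F\cdot 1=F$ --- and the full Wedderburn bookkeeping becomes unnecessary. Without that extra input from \cite{hv12}, what you have is a correct plan rather than a complete proof.
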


Finally, let $M$ be a Cliffordian manifold, i.e. let $(M,A)$ be a smooth manifold with 
$A =\cc l(s,t)$, such that  $2^{s+t+1} \leq \operatorname{dim} M$,  
then Cliffordian manifold has generic rank $2^{s+t}$.
For more information about  almost Cliffordian structures see  papers \cite{hv12,hv12b}

\subsection{Distributions} If $D,\bar{D}$ form a complete system of distributions (i.e. they are disjoint and $D + \bar{D} = TM$)
than there are two affinors  $P, \bar{P}$associate with them such that 
$$P^2 = P, \: \: \bar{P}^2 = \bar{P}, \: P \bar{P} = \bar{P} P =0 \text{ and } P + \bar P = E,$$
where $\text{rank } P= r$ and $\text{rank } \bar{P}= \bar{r}$.

 The representation of distributions by affinors can by extended to any complete system 
$D_i$ such that the affinors $P_i$ satisfy the properties
$$P_i^2 = P_i, \: \: P_i P_j =0 \text{ for } i\neq j, \text{ and } \sum_i P_i =E. $$

Considering the element $P= a_1 P_1+ \cdots+ a_n P_n \in A$, the 
matrix  
$$
\begin{pmatrix}
E P \\  
P_1 P \\ 
\vdots  \\
P_n P 
\end{pmatrix}
$$
is following
$$
\begin{pmatrix}
a_1 &0 & \cdots & 0 \\  
0   &a_2 & \cdots & 0 \\ 
\vdots &\vdots &\cdots &\vdots \\
0 &0\cdots &0 &a_n 
\end{pmatrix}$$ 
and therefore 
$\langle P_1, \dots, P_n \rangle$ has  weak generic rank $n$ by Lemma
\ref{lemma_det}.  
Finally, let $M$ be a manifold with complete system of distributions $D_1, \dots ,D_n$, i.e. 
let $(M,A)$ be a smooth manifold with  $A =\langle  P_1, \dots , P_n\rangle $, such that  
$2 n \leq \operatorname{dim} M$,  
then the $A$--structure has generic rank $n$.

\section*{Acknowledgment}
This work was supported by the European Regional Development Fund in the IT4Innovations Center of Excellence project CZ.1.05/1.1.00/02.0070.

\end{document}